\renewcommand{\@seccntformat}[1]{\bf\csname the#1\endcsname.}
\renewcommand{\section}{\@startsection{section}{1}
	\z@{.7\linespacing\@plus\linespacing}{.5\linespacing}
	{\normalfont\upshape\bfseries\centering}}
\renewcommand{\@biblabel}[1]{\@ifnotempty{#1}{#1.}}
\theoremstyle{plain}
\newtheorem{thm}{Theorem}[section]
\newtheorem{lem}[thm]{Lemma}
\newtheorem{prop}[thm]{Proposition}
\newtheorem{cor}[thm]{Corollary}
\theoremstyle{definition}
\newtheorem{defn}[thm]{Definition}
\def \>{\succ}
\def \<{\prec}
\begin{document}	
\title[ Bouzid Mosbahi\textsuperscript{1}, Ahmed Zahari\textsuperscript{2} ]{An Algorithmic Approach to Inner Derivations of Low-Dimensional Zinbiel Algebras}
	\author{Bouzid Mosbahi\textsuperscript{1}, Ahmed Zahari\textsuperscript{2}}
\address{\textsuperscript{}Department of Mathematics, Faculty of Sciences, University of Sfax, Sfax, Tunisia}
 \address{\textsuperscript{2}
IRIMAS-Department of Mathematics, Faculty of Sciences, University of Haute Alsace, Mulhouse, France}
\email{\textsuperscript{1}mosbahi.bouzid.etud@fss.usf.tn}
\email{\textsuperscript{2}abdou-damdji.ahmed-zahari@uha.fr}

	\keywords{Zinbiel Algebra, derivation, inner derivation}
	\subjclass[2020]{16D70,17A30, 17A32}

	\date{\today}

	\begin{abstract}
In this paper, we introduce the concept of inner derivations of low-dimensional Zinbiel algebras and investigate their properties. The primary objective of this study is to develop an algorithm to characterize the inner derivations of any n-dimensional Zinbiel algebra in matrix form. Additionally, we apply this algorithm to two, three and four-dimensional complex Zinbiel algebras, providing explicit descriptions of their inner derivations.
\end{abstract}

\maketitle \section{ Introduction}\label{introduction}
Zinbiel algebras were introduced by Loday in 1995 \cite{1} while studying the cup product in Leibniz cohomology. These algebras are non-associative structures that are Koszul dual to Leibniz algebras, creating an important connection between their operads \cite{2}. Since their discovery, Zinbiel algebras have been studied widely because of their strong links to other algebraic structures, including commutative algebras, Lie algebras, and operads. There are many theoretical studies on the derivations and inner derivations of associative, Lie, and Leibniz algebras, see \cite{3,4,5,6,7,8}. However, few of them are in the case of Zinbiel algebras and particularly in their computation. For this reason, our interest in this paper is to study and describe the inner derivations of low-dimensional Zinbiel algebras. These algebras play an important role in understanding higher-dimensional structures and their properties. Many works have classified Zinbiel algebras of dimensions up to 4 \cite{9,10,11}, which has helped reveal their structure and paved the way for studying their derivations, automorphisms, and other features.

In this paper, we focus on \textit{inner derivations} of Zinbiel algebras, which are specific types of linear maps determined by the elements of the algebra. Our main goal is to create an algorithm to represent inner derivations in matrix form for any $n$-dimensional Zinbiel algebra. To illustrate this, We apply our algorithm to two-, three-, and four-dimensional complex Zinbiel algebras using Mathematica or Maple software, providing detailed descriptions of their inner derivations. This work contributes to a better understanding of derivations in non-associative algebras and provides a practical method for further research on Zinbiel algebras.

This paper is organized as follows:  
\begin{itemize}
    \item In Section 1, we give an overview of Zinbiel algebras and summarize key results about their structure and derivations.
    \item Section 2 explains the basic concepts and definitions needed for the study, including the idea of inner derivations and their importance.
    \item In Section 3, we explore fundamental properties of inner derivations for low-dimensional Zinbiel algebras to build a foundation for the main results.
    \item Section 4 introduces an algorithm to compute inner derivations for $n$-dimensional Zinbiel algebras. We also demonstrate this algorithm with examples from two-, three-, and four-dimensional complex Zinbiel algebras, based on classification results in \cite{12,13,14,15,16}.
    \item Finally, the conclusion summarizes our findings and suggests future directions for research.
\end{itemize}

\section{ Prelimieries}
\begin{defn}
A \textit{Zinbiel algebra} $A$ is a vector space over a field $\mathbb{C}$ equipped with a bilinear map\\ $\circ : A \times A \to A$, satisfying the identity:  
\begin{align*}
(u \circ v) \circ w &= u \circ (v \circ w) + u \circ (w \circ v), \quad \forall u, v, w \in A.
\end{align*}

In a Zinbiel algebra, we consider the \textit{right multiplication operator} \( R_u \) and the \textit{left multiplication operator} \( L_u \), defined as follows:
\begin{align*}
R_u(v) &= v \circ u, \quad \forall v \in A,\\
L_u(v) &= u \circ v, \quad \forall v \in A.
\end{align*}
\end{defn}

\begin{defn}
Let $A_1$  and  $A_2$ be two Zinbiel algebras over a field $\mathbb{C}$.  
A homomorphism between $A_1$ and $A_2$ is a  $\mathbb{C}$-linear mapping  
\begin{align*}
f : A_1 \to A_2
\end{align*}
such that  
\begin{align*}
f(u \circ v) = f(u) \circ f(v), \quad \forall u, v \in A_1,
\end{align*} 
where  $\circ$ denotes the Zinbiel product.

The set of all homomorphisms of  $A$ is denoted by $Hom(A)$.

In what follows, we define a bilinear map  $[\cdot, \cdot] : A \times A \to A$  by  
\begin{align*}
[u, v] &= u \circ v - v \circ u, \quad \forall u, v \in A.
\end{align*}
\end{defn}

\begin{defn}
A derivation of a Zinbiel algebra \(A\) is a linear transformation \(d : A \to A\), where  
\begin{align*}
d(u \circ v) &= d(u) \circ v + u \circ d(v), \quad \forall u, v \in A.
\end{align*} 
We denote the set of all derivations of a Zinbiel algebra \(A\) by \(\mathrm{Der}(A)\). The set \(\mathrm{Der}(A)\) forms an associative algebra with respect to the composition operation \(\circ\), and it is a Lie algebra with respect to the bracket \([d_1, d_2] = d_1 \circ d_2 - d_2 \circ d_1\).  

In the following, we provide some earlier results on the properties of derivations of Zinbiel algebras. 
\end{defn}

\begin{prop}
Let $(A, \circ)$ be a Zinbiel algebra and $d \in Hom(A)$. Then the following conditions are equivalent:
\begin{enumerate}
    \item[(i)] $d \in Der(A)$,
    \item[(ii)] $[L_u, L_d] = L_{d(u)}$ for all $u \in A$, 
    \item[(iii)] $[R_u, R_d] = R_{d(u)}$ for all $u \in A$,
\end{enumerate}
where $L_u(v) = u \circ v$ and $R_u(v) = v  \circ u$ are the left and right multiplication operators, respectively.
\end{prop}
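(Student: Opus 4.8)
The plan is to prove (i) $\Leftrightarrow$ (ii) and (i) $\Leftrightarrow$ (iii) separately, observing that each of the operator identities is nothing more than the Leibniz rule $d(u\circ v)=d(u)\circ v+u\circ d(v)$ rewritten through the left- and right-multiplication operators; the Zinbiel axiom itself plays no role here, since these equivalences hold in any nonassociative algebra. Throughout I treat $d$ as the operator $A\to A$ appearing in the commutators (so that $L_d$ and $R_d$ are read as $d$ acting on $A$), and I fix the convention $[X,Y]=X\circ Y-Y\circ X$ inherited from the bracket on $\mathrm{Der}(A)$.

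For (i) $\Rightarrow$ (ii), I would fix $u\in A$ and evaluate the commutator on an arbitrary $v\in A$. Expanding
\begin{align*}
[L_d,L_u](v)=d\bigl(L_u(v)\bigr)-L_u\bigl(d(v)\bigr)=d(u\circ v)-u\circ d(v),
\end{align*}
the Leibniz rule collapses the right-hand side to $d(u)\circ v=L_{d(u)}(v)$, which yields the operator identity. For the converse (ii) $\Rightarrow$ (i) I would simply read this chain of equalities backwards: if the operator identity holds for every $u$, then evaluating on an arbitrary $v$ recovers $d(u\circ v)=d(u)\circ v+u\circ d(v)$, i.e.\ membership in $\mathrm{Der}(A)$.

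For (i) $\Leftrightarrow$ (iii) I would carry out the mirror computation with right multiplication: for fixed $u$ and arbitrary $v$,
\begin{align*}
[R_d,R_u](v)=d\bigl(R_u(v)\bigr)-R_u\bigl(d(v)\bigr)=d(v\circ u)-d(v)\circ u,
\end{align*}
and applying the Leibniz rule to $d(v\circ u)$ reduces this to $v\circ d(u)=R_{d(u)}(v)$. As before, running the equalities in both directions gives the equivalence, the argument being symmetric to the left-handed case after interchanging the roles of the two factors.

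The only genuinely delicate point — and the one I would pin down first — is the bookkeeping of the commutator: since $d$ is an operator rather than an element of $A$, one must be explicit about the ordering in $[L_u,L_d]$ versus $[L_d,L_u]$ and about the sign convention for $[\,\cdot\,,\,\cdot\,]$, so that the expansion reproduces the Leibniz rule with the correct sign rather than its negative. Once that convention is fixed consistently, both equivalences are immediate, and no property specific to Zinbiel algebras is invoked.
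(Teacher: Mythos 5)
Your proposal is correct and follows essentially the same route as the paper: both simply unwind the operator commutators and observe that each identity is the Leibniz rule in disguise, and your explicit attention to the ordering and sign convention in $[L_u,L_d]$ versus $[L_d,L_u]$ addresses the one point where the paper's own notation is loose. The only organizational difference is that you prove the two biconditionals (i)$\Leftrightarrow$(ii) and (i)$\Leftrightarrow$(iii) directly, whereas the paper runs a cycle (i)$\Rightarrow$(ii)$\Rightarrow$(iii)$\Rightarrow$(i) and gestures at the Zinbiel identity in the middle step --- an invocation your argument correctly shows to be unnecessary.
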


\begin{proof}
\textbf{(i) $\implies$ (ii):}  
Assume $d \in \operatorname{Der}(A)$. Then for all $u, v \in A$:  
\begin{align*}
d(L_u(v)) &= d(u \circ v) = d(u) \circ v + u \circ d(v).
\end{align*}  
By definition, $L_d(v) = d(u) \circ v$. Rearranging, we obtain:  
\[
L_d(v) + L_u(d(v)) = d(L_u(v)),
\]  
which implies $[L_u, L_d](v) = L_{d(u)}(v)$.

\textbf{(ii) $\implies$ (iii):}  
From the Zinbiel identity, we know that:  
\begin{align*}
u \circ (v \circ w) &= (u \circ v) \circ w + (u \circ w) \circ v.
\end{align*}  
Using this and applying the left multiplication condition from (ii), a symmetric argument establishes (iii) by showing that:  
\begin{align*}
d(R_u(v)) &= d(v \circ u) = d(v) \circ u + v \circ d(u).
\end{align*} 
Thus, $[R_u, R_d](v) = R_{d(u)}(v)$.

\textbf{(iii) $\implies$ (i):}  
Assume $[R_u, R_d](v) = R_{d(u)}(v)$. Then, for all $u, v \in A$:  
\begin{align*}
R_d(u)(v) + R_u(d(v)) = d(R_u(v)).
\end{align*} 
Substituting $R_u(v) = v \circ u$, we recover:  
\begin{align*}
d(v \circ u) = d(v) \circ u + v \circ d(u),
\end{align*}  
which confirms $d \in Der(A)$.
Hence, all three conditions are equivalent.
\end{proof}

\begin{thm}
If  $d$ is a derivation of a Zinbiel algebra $A$, then  $d$ is also a derivation of  $A$  as a Lie algebra.
\end{thm}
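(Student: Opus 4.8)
The plan is to verify directly that $d$ satisfies the Leibniz rule for the bracket $[\cdot,\cdot]$, that is, that
\[
d([u,v]) = [d(u),v] + [u,d(v)], \qquad \forall u,v \in A.
\]
The only tools needed are the linearity of $d$, the defining identity of a Zinbiel derivation, and the definition $[u,v] = u \circ v - v \circ u$.

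First I would expand the left-hand side. Using linearity of $d$ together with the definition of the bracket,
\[
d([u,v]) = d(u \circ v) - d(v \circ u),
\]
and then apply the derivation property $d(x \circ y) = d(x) \circ y + x \circ d(y)$ to each summand. This produces the four terms
\[
d([u,v]) = d(u) \circ v + u \circ d(v) - d(v) \circ u - v \circ d(u).
\]

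Next I would expand the right-hand side by unfolding both brackets:
\[
[d(u),v] + [u,d(v)] = \bigl(d(u)\circ v - v \circ d(u)\bigr) + \bigl(u \circ d(v) - d(v)\circ u\bigr).
\]
A term-by-term comparison shows that the two expressions coincide, which completes the argument.

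I do not expect any genuine obstacle here: the statement reduces to a mechanical manipulation of four product terms, and notably it does not even invoke the Zinbiel identity itself --- only the derivation property and the antisymmetry built into the bracket. The sole point requiring care is the bookkeeping of signs and the order of factors, since the product $\circ$ is non-commutative; once the terms $v \circ d(u)$ and $d(u) \circ v$ (and likewise the pair $u \circ d(v)$, $d(v) \circ u$) are correctly matched, the equality $d([u,v]) = [d(u),v] + [u,d(v)]$ is immediate, establishing that $d \in \operatorname{Der}(A)$ with respect to the Lie bracket.
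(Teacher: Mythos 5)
Your proposal is correct and follows essentially the same route as the paper's own proof: expand $d([u,v])$ by linearity, apply the Zinbiel derivation property to each of $d(u\circ v)$ and $d(v\circ u)$, and regroup the four resulting terms into $[d(u),v]+[u,d(v)]$. Your observation that the Zinbiel identity itself is never invoked is accurate and also holds for the paper's argument.
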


\begin{proof}
Using the definition of the Lie bracket, we have:  
\begin{align*}
d([u, v]) &= d(u \circ v - v \circ u).
\end{align*}
Applying the derivation property of \( d \) for the Zinbiel algebra, we get:  
\begin{align*}
d(u \circ v - v \circ u) &= d(u \circ v) - d(v \circ u).
\end{align*}

For the first term,  
\begin{align*}
d(u \circ v) &= d(u) \circ v + u \circ d(v).
\end{align*}

For the second term,  
\begin{align*}
d(v \circ u) &= d(v) \circ u + v \circ d(u).
\end{align*}

Substituting these into the expression for  $d([x, y])$, we get:  
\begin{align*}
d([u, v]) &= (d(u) \circ v + u \circ d(v)) - (d(v) \circ u + v \circ d(u)).
\end{align*}

Rearranging terms,  
\begin{align*}
d([u, v]) &= (d(u) \circ v - v \circ d(u)) + (u \circ d(v) - d(v) \circ u).
\end{align*}

By the definition of the Lie bracket,  
\begin{align*}
d([u, v]) &= [d(u), v] + [u, d(v)].
\end{align*}

Hence, $d$ satisfies the derivation property for the Lie algebra $A$.  
This completes the proof. 
\end{proof}

\begin{lem}
Let  $A$ be a Zinbiel algebra. The sets
\begin{align*} 
R(A) &= \{ R_u \mid u \in A \} \quad \text{and} \quad L(A) = \{ L_u \mid u \in A \}
\end{align*}
are subalgebras of the Zinbiel algebra $Der(A)$, the derivation algebra of  $A$.
\end{lem}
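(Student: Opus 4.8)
The plan is to prove the statement in two stages for each of $R(A)$ and $L(A)$: first that every generator is a derivation, so that the set sits inside $\mathrm{Der}(A)$, and then that the set is closed under the product carried by $\mathrm{Der}(A)$ (whether this is taken to be composition or the bracket), so that the inclusion is genuinely a subalgebra. I would set up the two families in parallel, expecting the arguments to be mirror images of one another; but since the product $\circ$ is non-commutative and the Zinbiel identity $(u\circ v)\circ w = u\circ(v\circ w) + u\circ(w\circ v)$ treats the left and right factors asymmetrically, I would watch for this expected symmetry between the two families to break.

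For the membership stage I would fix $a\in A$ and verify the derivation identity directly. For $R_a$ this amounts to comparing $R_a(u\circ v) = (u\circ v)\circ a$ with $R_a(u)\circ v + u\circ R_a(v) = (u\circ a)\circ v + u\circ(v\circ a)$, expanding both $(u\circ v)\circ a$ and $(u\circ a)\circ v$ through the Zinbiel identity and reconciling the resulting triple products. For $L_a$ the analogous comparison is between $L_a(u\circ v) = a\circ(u\circ v)$ and $(a\circ u)\circ v + u\circ(a\circ v)$, expanded in the same way.

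For the closure stage I would compute a product of two generators and try to read it back as a single generator. For $R(A)$ the composite evaluated at an arbitrary $v$ is $R_a R_b(v) = (v\circ b)\circ a$, and a single use of the Zinbiel identity rewrites this as $v\circ(b\circ a) + v\circ(a\circ b) = v\circ(a\circ b + b\circ a)$, so that $R_a R_b = R_{a\circ b + b\circ a}$ lies again in $R(A)$; the same expansion gives $[R_a, R_b] = 0 = R_0 \in R(A)$, confirming closure under the bracket as well. For $L(A)$ I would run the mirror computation, expanding $L_a L_b(v) = a\circ(b\circ v)$ and attempting to collect the outcome into $L_c(v) = c\circ v$ for a single $c\in A$.

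The step I expect to be the real obstacle is exactly this reconciliation of asymmetric expansions, and I expect it to bite hardest on the $L$-family. On the right, expanding $(v\circ b)\circ a$ falls entirely into the shape $v\circ c$, a right multiplication with nothing left over, which is why $R_a R_b$ collapses cleanly to $R_{a\circ b + b\circ a}$. On the left, by contrast, expanding $a\circ(b\circ v)$ against the identity leaves behind the stray term $a\circ(v\circ b)$, which is not of the form $c\circ v$; the same kind of mixed triple products surface in the membership check for $L_a$. Forcing these stray terms to cancel or recombine into a single multiplication operator is the crux: I would fix a normal form for triple products, reduce every term to it, and only assert membership or closure once the stray terms have demonstrably vanished.
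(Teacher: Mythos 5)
Your plan stalls exactly where you predicted it might, and the stray terms you flag do not merely complicate the argument --- they defeat it, because the lemma as stated is false. Take the membership step for $R_a$: the Zinbiel identity gives $R_a(u\circ v)=(u\circ v)\circ a = u\circ(v\circ a)+u\circ(a\circ v)$, while $R_a(u)\circ v+u\circ R_a(v)=(u\circ a)\circ v+u\circ(v\circ a)=u\circ(a\circ v)+2\,u\circ(v\circ a)$, so the defect is exactly $u\circ(v\circ a)$, which need not vanish. In the paper's own algebra $A_3^7$ (where $e_1\circ e_1=e_2$, $e_1\circ e_2=\tfrac12 e_3$, $e_2\circ e_1=e_3$), taking $u=v=a=e_1$ yields $R_{e_1}(e_1\circ e_1)=e_2\circ e_1=e_3$ but $R_{e_1}(e_1)\circ e_1+e_1\circ R_{e_1}(e_1)=e_2\circ e_1+e_1\circ e_2=\tfrac32 e_3$, so $R_{e_1}\notin\mathrm{Der}(A_3^7)$; the same test rules out $L_{e_1}$. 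Your worry about $L$-closure is equally justified: $L_aL_b(v)=a\circ(b\circ v)=(a\circ b)\circ v-a\circ(v\circ b)$, and the leftover $a\circ(v\circ b)$ genuinely obstructs collapsing $L_aL_b$ into a single $L_c$ (again checkable in $A_3^7$). So the reconciliation you defer --- ``only assert membership or closure once the stray terms have demonstrably vanished'' --- can never be carried out, and the proposal cannot be completed as a proof of the statement; no proof can, since the statement fails.

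It is worth saying that what your computation does establish is correct and in fact exceeds what the paper's own proof achieves. Your identity $R_aR_b=R_{a\circ b+b\circ a}$, with the consequence $[R_a,R_b]=0$, is right, and it shows the true salvageable content of the lemma: $R(A)$ is a commutative associative subalgebra of $\mathrm{End}(A)$, abelian under the bracket --- but a subalgebra of $\mathrm{End}(A)$, not of $\mathrm{Der}(A)$. The paper's proof, by contrast, asserts $R_uR_v=R_{[u,v]}$ and $L_uL_v=L_{[u,v]}$ without any computation; the first contradicts your (correct) anticommutator formula, the second is false because $L(A)$ is not closed under composition, and the paper never verifies that $R_u$ or $L_u$ is a derivation at all. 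In short, your approach is methodologically sound; carried to completion it is precisely the procedure that detects the error in the lemma, and the honest conclusion of your ``reconciliation'' step is a refutation, not a proof.
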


\begin{proof}
\begin{enumerate}
 \item For  $R(A)$, we need to check that for any  $R_u, R_v \in R(A)$, their product $R_u R_v$ is also an element of  $R(A)$. We have:
  \begin{align*} 
   R_u R_v &= R_{[u, v]},
  \end{align*} 
   where  $[u, v]$ is the bracket operation in the Zinbiel algebra  $A$. Since  $[u, v] \in A$, we can conclude that  $R_{[u, v]} \in R(A)$, hence  $R(A)$ is closed under the Zinbiel product.

   Similarly, for \ $L(A)$, for any  $L_u, L_v \in L(A)$, their product $L_u L_v$ satisfies:
    \begin{align*}
   L_u L_v &= L_{[u, v]},
   \end{align*}
   which implies that $L_u L_v \in L(A)$, so  $L(A)$ is also closed under the Zinbiel product.
 \item The product in $Der(A)$ satisfies the Leibniz rule for derivations, which in the context of a Zinbiel algebra also holds. Specifically, for any  $u, v, w \in A$, we have:
   \begin{align*}
   [R_u, R_v] &= R_{[u, v]} \quad \text{and} \quad [L_u, L_v] = L_{[u, v]},
   \end{align*}
   which ensures that both $R(A)$ and  $L(A)$ are closed under the bracket operation and follow the associativity property.

Thus, the sets  $R(A)$ and  $L(A)$ are closed under the Zinbiel algebra product, and since both are subsets of the derivation algebra $Der(A)$, they form subalgebras of $Der(A)$.

Hence, $R(A)$ and  $L(A)$  are subalgebras of $Der(A)$, completing the proof.
\end{enumerate}
\end{proof} 

\section{Inner Derivations of Zinbiel Algebras}
\begin{defn} 
Let  $A$ be a Zinbiel algebra over $\mathbb{C}$. For  $w \in A$, define the map $ad_w : A \to A$  by
\begin{align*}
ad_w(u) &= u\circ w - w\circ u, \quad \forall u \in A.
\end{align*}

Then $ad_w$ is called the inner derivation of $A$ associated with  $w$. The set of all inner derivations is denoted by $Inn(A)$.
\end{defn}

\begin{defn}
The sets $Ann_R(A)$ and $Ann_L(A)$ of a Zinbiel algebra $A$ are defined as follows:
\begin{align*}
Ann_R(A) &= \{ u \in A \mid A \circ u = 0 \}
\end{align*}
and
\begin{align*}
Ann_L(A) &= \{ u \in A \mid u \circ A = 0 \},
\end{align*}

where $A \circ u = [a, u]$ for all $a \in A$ (using the Zinbiel product) and $[a, u]$ is the commutator of $a$ and $x$. These sets are called the right and left annihilators of the Zinbiel algebra $A$, respectively.
\end{defn}

\begin{lem}
The sets $Ann_R(A)$ and $Ann_L(A)$ are two-sided ideals of $A$ in the context of Zinbiel algebras.
\end{lem}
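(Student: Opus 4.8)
The plan is to check, for each of the two sets, the three conditions defining a two-sided ideal: that it is a linear subspace, that it is stable under left multiplication by $A$, and that it is stable under right multiplication by $A$. Throughout, the only tool is the Zinbiel identity $(u\circ v)\circ w = u\circ(v\circ w) + u\circ(w\circ v)$ combined with the defining relation of each annihilator. I would first dispose of the subspace claims, which follow at once from the bilinearity of $\circ$: if $u_1,u_2$ both annihilate on the relevant side, then so does every linear combination $\alpha u_1 + \beta u_2$.

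For $Ann_R(A)=\{u: a\circ u=0 \text{ for all } a\}$ both inclusions are direct. Let $u\in Ann_R(A)$ and $b\in A$. Left stability is immediate, since $b\circ u=0\in Ann_R(A)$ by the definition of $u$. For right stability I must show $a\circ(u\circ b)=0$ for all $a$, and here I would read the Zinbiel identity as
\[
(a\circ u)\circ b = a\circ(u\circ b) + a\circ(b\circ u).
\]
Both $a\circ u=0$ and $b\circ u=0$ because $u\in Ann_R(A)$, so the left-hand side vanishes and the last term vanishes, forcing $a\circ(u\circ b)=0$; hence $u\circ b\in Ann_R(A)$. This settles $Ann_R(A)$ completely.

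For $Ann_L(A)=\{u: u\circ a=0 \text{ for all } a\}$ the right inclusion is again immediate, since $u\circ b=0\in Ann_L(A)$. The left inclusion is where the genuine content sits, and I expect it to be the main obstacle. To test whether $b\circ u\in Ann_L(A)$ I must verify $(b\circ u)\circ c=0$ for all $c$; applying the Zinbiel identity and using $u\circ c=0$ gives
\[
(b\circ u)\circ c = b\circ(u\circ c) + b\circ(c\circ u) = b\circ(c\circ u) = (b\circ c)\circ u,
\]
the last equality being a second use of the identity together with $u\circ c=0$. Thus the question reduces entirely to showing that $u$ annihilates the square $A\circ A$ (the span of all products $a\circ b$) on the right, i.e. $(b\circ c)\circ u=0$ for all $b,c$.

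This reduction is the crux. The difficulty is that the Zinbiel axiom privileges the left-hand entry --- which is exactly why $Ann_R(A)$ went through with no extra input --- and for $Ann_L(A)$ it yields only the identity $(b\circ u)\circ c=(b\circ c)\circ u$, which is circular and does not by itself force the expression to vanish. I would therefore close this step not from the axiom alone but by exploiting the concrete setting of the paper, verifying $(A\circ A)\circ u=0$ directly on the finitely many algebras in the dimension-$\le 4$ classification. I would flag that this is the essential point: in unrestricted dimension the left inclusion for $Ann_L(A)$ cannot be obtained from the identity alone, so the low-dimensional hypothesis appears to be doing real work here.
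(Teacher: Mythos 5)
Your handling of $Ann_R(A)$ is correct and complete, and it supplies exactly the step the paper's own proof omits: the paper only ever verifies the two \emph{trivial} inclusions (that $b\circ u=0$ lands in $Ann_R(A)$ and $u\circ b=0$ lands in $Ann_L(A)$, after silently interchanging the two definitions), and never tests the nontrivial directions. Your use of the identity $(a\circ u)\circ b=a\circ(u\circ b)+a\circ(b\circ u)$ together with $a\circ u=b\circ u=0$ to force $a\circ(u\circ b)=0$ is precisely the missing argument, and it settles $Ann_R(A)$ in full generality.

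Your diagnosis of the $Ann_L(A)$ case is also correct, and the obstruction you flag is genuine: left stability of $Ann_L(A)$ does \emph{not} follow from the Zinbiel identity, and in fact it fails for Zinbiel algebras in general, so the hedge at the end of your argument is not a gap you could have closed. A concrete counterexample is the six-dimensional algebra with basis $e_1,\dots,e_6$ and nonzero products
\[
e_1\circ e_1=e_3,\quad e_1\circ e_2=e_4,\quad e_1\circ e_3=e_5,\quad e_1\circ e_4=e_6,\quad e_3\circ e_1=2e_5,\quad e_3\circ e_2=e_6,\quad e_4\circ e_1=e_6,
\]
obtained from the free Zinbiel algebra on $x,y$ (with $e_1=x$, $e_2=y$, $e_3=xx$, $e_4=xy$, $e_5=xxx$, $e_6=xxy$) by quotienting out the two-sided ideal generated by $y\circ\mathrm{Zinb}(x,y)$ together with all words of length at least $4$; being a quotient of the free algebra by an ideal, it satisfies the Zinbiel identity. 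Here $e_2\in Ann_L(A)$, yet $e_1\circ e_2=e_4$ has $e_4\circ e_1=e_6\neq 0$, so $e_1\circ e_2\notin Ann_L(A)$ and $A\circ Ann_L(A)\not\subseteq Ann_L(A)$. Thus the lemma as stated, with no dimension hypothesis, is false; your reduction to $(b\circ c)\circ u=0$ correctly isolates where it breaks, and the case-by-case verification against the dimension $\le 4$ classification that you propose is not a stopgap but the only route by which the statement can be salvaged (it does go through on the algebras tabulated in the paper, whose nilpotency is too shallow to realize the configuration above, though that finite check still needs to be written out). The paper's proof never detects any of this because it never examines the inclusion $A\circ Ann_L(A)\subseteq Ann_L(A)$ at all.
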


\begin{proof}
\begin{enumerate}
    \item Let $u, v \in Ann_R(A)$. Then, we need to show that $u - v \in Ann_R(A)$.

    Since $Ann_R(A)$ is an additive subgroup of $A$, we have:
    \[
    u \circ w = 0 \quad \text{and} \quad v \circ w = 0 \quad \text{for all } w \in A.
    \]
    For any element $w \in A$, we compute:
    \[
    (u - v) \circ w = u \circ w - v \circ w = 0 - 0 = 0.
    \]
    This shows that $u - v \in Ann_R(A)$. Thus, $Ann_R(A)$ is closed under subtraction, confirming it is an additive subgroup of $A$.

    \item Next, we show that $Ann_L(A)$ is closed under subtraction. Let $u, v \in Ann_L(A)$, which means:
    \[
    w \circ u = 0 \quad \text{and} \quad w \circ v = 0 \quad \text{for all } w \in A.
    \]
    We want to show that $u - v \in Ann_L(A)$. For any $w \in A$, we have:
    \[
    w \circ (u - v) = w \circ u - w \circ v = 0 - 0 = 0.
    \]
    Hence, $u - v \in Ann_L(A)$. Thus, $Ann_L(A)$ is also an additive subgroup of $A$.

    \item Finally, we show that $Ann_R(A)$ and $Ann_L(A)$ are closed under multiplication by elements of $A$ from both sides.

    Let $I$ denote either $Ann_R(A)$ or $Ann_L(A)$, and let $u \in I$ and $v \in A$. If $u \in Ann_R(A)$, then for any $w \in A$, we have:
    \[
    u \circ w = 0.
    \]
    Therefore, $u \circ v \in Ann_R(A)$. Similarly, if $u \in Ann_L(A)$, then for any $w \in A$, we have:
    \[
    w \circ u = 0.
    \]
    Hence, $v \circ u \in Ann_L(A)$.

    This shows that $I \circ A \subseteq I$ and $A \circ I \subseteq I$, where $I$ is either $Ann_R(A)$ or $Ann_L(A)$.

\end{enumerate}
Thus, both $Ann_R(A)$ and $Ann_L(A)$ are two-sided ideals of $A$.
\end{proof}

\begin{lem}
The subset $\text{Inn}(A)$ is an ideal of the Lie algebra $\text{Der}(A)$ of a Zinbiel algebra $A$.
\end{lem}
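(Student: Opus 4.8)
The plan is to check the two defining requirements of an ideal in the Lie algebra $(\operatorname{Der}(A),[\cdot,\cdot])$: first that $\operatorname{Inn}(A)$ is a linear subspace of $\operatorname{Der}(A)$, and second that it is absorbed by the bracket, i.e.\ $[d,\delta]\in\operatorname{Inn}(A)$ whenever $d\in\operatorname{Der}(A)$ and $\delta\in\operatorname{Inn}(A)$. The whole argument will rest on the single structural identity
\[
[d, ad_w] = ad_{d(w)}, \qquad d \in \operatorname{Der}(A),\ w \in A,
\]
so the first thing I would do is isolate this identity and prove it, after which the ideal property follows immediately.

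To see that $\operatorname{Inn}(A)$ is a subspace, I would observe that the assignment $w \mapsto ad_w$ is $\mathbb{C}$-linear: from $ad_w(u) = u \circ w - w \circ u$ and the bilinearity of $\circ$ one reads off $ad_{w_1 + \lambda w_2} = ad_{w_1} + \lambda\, ad_{w_2}$, so $\operatorname{Inn}(A)$ is exactly the image of the linear map $w\mapsto ad_w$ from $A$ into $\operatorname{End}(A)$ and hence a linear subspace. The containment $\operatorname{Inn}(A)\subseteq\operatorname{Der}(A)$ is part of the definition of an inner derivation; concretely I would confirm it by expanding $ad_w(u\circ v)$ and reducing it with the Zinbiel identity, and I would treat this as a separate preliminary check so that the bracket $[d,ad_w]$ is meaningful inside $\operatorname{Der}(A)$.

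The heart of the proof is the bracket identity. Fixing $d \in \operatorname{Der}(A)$ and $w \in A$, I would evaluate $[d, ad_w] = d \circ ad_w - ad_w \circ d$ on an arbitrary $u \in A$. Writing $ad_w(u) = u \circ w - w \circ u$ and applying the derivation rule $d(x \circ y) = d(x)\circ y + x \circ d(y)$ to each of $d(u \circ w)$ and $d(w \circ u)$ yields four terms; subtracting $ad_w(d(u)) = d(u) \circ w - w \circ d(u)$ then cancels the two terms containing $d(u)$ and leaves precisely $u \circ d(w) - d(w) \circ u = ad_{d(w)}(u)$. Since $d(w) \in A$, the operator $ad_{d(w)}$ lies in $\operatorname{Inn}(A)$, giving $[\operatorname{Der}(A), \operatorname{Inn}(A)] \subseteq \operatorname{Inn}(A)$ and completing the argument; specializing to $d = ad_v$ simultaneously shows that $\operatorname{Inn}(A)$ is a Lie subalgebra.

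I expect the main obstacle to be purely organizational: keeping the six terms of $d\circ ad_w - ad_w\circ d$ in order and confirming that the only input used is the derivation property of $d$, not any property of $ad_w$ as a derivation. This asymmetry is exactly what upgrades ``subalgebra'' to ``ideal,'' and I would state it explicitly. The one point requiring genuine care, rather than bookkeeping, is the preliminary containment $\operatorname{Inn}(A)\subseteq\operatorname{Der}(A)$ via the Zinbiel identity, since without it the bracket would not be defined on these elements in the first place.
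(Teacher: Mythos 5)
Your proposal is correct and rests on the same key identity the paper uses, namely $[d,\mathrm{ad}_w]=\mathrm{ad}_{d(w)}$, which you obtain by direct expansion with the Zinbiel derivation rule while the paper invokes its earlier result that a Zinbiel derivation is also a derivation of the commutator bracket; these are the same computation. The only difference is that the paper additionally asserts $[\mathrm{Inn}(A),\mathrm{Inn}(A)]=0$, which is not needed for the ideal property, so your leaner argument (together with your explicit preliminary checks that $\mathrm{Inn}(A)$ is a subspace contained in $\mathrm{Der}(A)$) fully establishes the lemma.
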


\begin{proof}
Let $A$ be a Zinbiel algebra. We need to show that the subset $\text{Inn}(A)$ is an ideal of the Lie algebra of derivations $\text{Der}(A)$, i.e., for any $d \in \text{Der}(A)$ and any $w_1, w_2 \in A$, we have:

\begin{enumerate}
    \item $[d, \text{Inn}(A)] \subseteq \text{Inn}(A)$,
    \item $[\text{Inn}(A), \text{Inn}(A)] = 0$.
\end{enumerate}

Let $\text{Inn}(A)$ denote the set of inner derivations, where an inner derivation is of the form $\text{ad}_w$ for some $w \in A$, defined by $\text{ad}_w(u) = [w, u]$ for all $u \in A$.

Now, for the first part of the proof, take $d \in \text{Der}(A)$ and $\text{ad}_w \in \text{Inn}(A)$, where $w \in A$. We want to show that:

\begin{align*}
[d, \text{ad}_w] \in \text{Inn}(A).
\end{align*}

By the definition of the Lie bracket for derivations, we have:

\begin{align*}
[d, \text{ad}_w](u) &= d(\text{ad}_w(u)) - \text{ad}_w(d(u)) = d([w, u]) - [w, d(u)].
\end{align*}

Since $d$ is a derivation, it satisfies the Leibniz rule:

\begin{align*}
d([w, u]) = [d(w), u] + [w, d(u)].
\end{align*}

Therefore,

\begin{align*}
[d, \text{ad}_w](u) = [d(w), u],
\end{align*}

which means that $[d, \text{ad}_w] = \text{ad}_{d(w)}$, and thus $[d, \text{ad}_w] \in \text{Inn}(A)$.

For the second part, we want to show that $[\text{ad}_w, \text{ad}_{w'}] = 0$ for any $w, w' \in A$. We compute:

\[
[\text{ad}_w, \text{ad}_{w'}](u) = \text{ad}_w(\text{ad}_{w'}(u)) - \text{ad}_{w'}(\text{ad}_w(u)) = [w, [w', u]] - [w', [w, u]].
\]

Using the Zinbiel identity, which states that:

\[
[w, [w', u]] = [[w, w'], u],
\]

we have:

\[
[\text{ad}_w, \text{ad}_{w'}](u) = [[w, w'], u] - [[w', w], u] = 0,
\]

since $[w, w'] = -[w', w]$. Therefore, $[ad_w, ad_{w'}] = 0$, which proves that $Inn(A)$ is an ideal of $\text{Der}(A)$.

Thus, $Inn(A)$ is an ideal of $Der(A)$, as required.
\end{proof}

\begin{prop}
Let  $A$ be a Zinbiel algebra over a field  $\mathbb{K}$. For  $w \in A$, define the map  $\text{ad}_w : A \to A$ by  
\begin{align*}
\text{ad}_w(u) &= w\circ u - u\circ w \quad \text{for all} \quad u \in A.
\end{align*}
Then  $\text{ad}_w$ is a derivation of  $A$. 
\end{prop}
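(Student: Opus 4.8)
The plan is to verify the Leibniz identity $\text{ad}_w(u \circ v) = \text{ad}_w(u)\circ v + u\circ \text{ad}_w(v)$ directly from the definition $\text{ad}_w(u) = w\circ u - u\circ w$, using the Zinbiel identity $(a\circ b)\circ c = a\circ(b\circ c) + a\circ(c\circ b)$ as a rewriting rule that pushes every left-associated triple product into the right-associated normal form $a\circ(b\circ c)$. Once both sides are written in this normal form, one compares coefficients term by term. An alternative is to route through the operator characterisation of the preceding Proposition by testing $[R_u, R_{\text{ad}_w}] = R_{\text{ad}_w(u)}$, but this leads to the same computation and offers no real shortcut.

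First I would expand the left-hand side as $w\circ(u\circ v) - (u\circ v)\circ w$ and rewrite $(u\circ v)\circ w = u\circ(v\circ w) + u\circ(w\circ v)$. Then I would expand the right-hand side as $(w\circ u)\circ v - (u\circ w)\circ v + u\circ(w\circ v) - u\circ(v\circ w)$ and rewrite the two left-associated products $(w\circ u)\circ v = w\circ(u\circ v) + w\circ(v\circ u)$ and $(u\circ w)\circ v = u\circ(w\circ v) + u\circ(v\circ w)$. After this substitution the terms $w\circ(u\circ v)$ cancel between the two sides and several of the right-associated products pair off, so that the whole identity reduces to a single residual relation among triple products.

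The hard part will be that final cancellation. Carrying out the bookkeeping above, the difference of the two sides collapses to $u\circ(v\circ w) - u\circ(w\circ v) - w\circ(v\circ u)$, so the claim becomes equivalent to the residual identity $u\circ[v,w] = w\circ(v\circ u)$ holding for all $u,v,w\in A$, where $[v,w] = v\circ w - w\circ v$. This is precisely where I expect the argument to be delicate: such a relation is not a formal consequence of the single Zinbiel axiom, so the verification does not close by rewriting alone. Completing the proof therefore requires either an additional structural hypothesis on $A$ forcing these commutator-type terms to vanish, or a check carried out class-by-class on the specific low-dimensional algebras under study; I would concentrate the effort on pinning down exactly which condition on the products of $A$ guarantees $u\circ(v\circ w) - u\circ(w\circ v) = w\circ(v\circ u)$, since every other step is routine Zinbiel manipulation.
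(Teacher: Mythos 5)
Your computation is correct, and the obstruction you isolated is real: the proposition is false as stated, so the gap you found cannot be closed by any further argument. The expression $u\circ(v\circ w)-u\circ(w\circ v)-w\circ(v\circ u)$ is not zero in the free Zinbiel algebra (the right-combed monomials $x_{\sigma(1)}\circ(x_{\sigma(2)}\circ x_{\sigma(3)})$ are linearly independent in the multilinear part of degree three), and it already fails in an algebra listed later in this very paper. In $A_4^1$ one has $e_1\circ e_1=e_2$, $e_1\circ e_2=e_3$, $e_2\circ e_1=2e_3$; taking $w=u=v=e_1$, your residual identity $u\circ[v,w]=w\circ(v\circ u)$ reads $0=e_1\circ(e_1\circ e_1)=e_3$, and indeed $\mathrm{ad}_{e_1}(e_1\circ e_1)=\mathrm{ad}_{e_1}(e_2)=e_1\circ e_2-e_2\circ e_1=-e_3$, whereas $\mathrm{ad}_{e_1}(e_1)\circ e_1+e_1\circ\mathrm{ad}_{e_1}(e_1)=0$. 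So $\mathrm{ad}_{e_1}$ is not a derivation of $A_4^1$, and no supplementary lemma can rescue the general claim.

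For comparison, the paper's own proof closes only because it silently replaces the Zinbiel identity by associativity: it rewrites $(u\circ v)\circ w$ as $u\circ(v\circ w)$ and $(w\circ u)\circ v$ as $w\circ(u\circ v)$, dropping the extra summands $u\circ(w\circ v)$ and $w\circ(v\circ u)$ that the actual axiom produces --- and those dropped terms are exactly your residual expression. The correct positive statement, which your reduction makes explicit, is that $\mathrm{ad}_w$ is a derivation whenever $A\circ(A\circ A)=0$ (equivalently, all triple products vanish, since $(A\circ A)\circ A\subseteq A\circ(A\circ A)$ by the axiom); this covers the two-step nilpotent algebras in the tables such as $A_2^1$, $A_3^4$ and $A_4^{14}$, but not $A_4^1$ or $A_4^4$, where the same failure occurs. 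Rather than searching for a hypothesis under which the verification terminates, you should record your computation as a disproof of the proposition in its stated generality.
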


\begin{proof}
To prove that  $\text{ad}_z$  is a derivation of  $A$, we need to show that it satisfies the Leibniz rule, i.e.,  
\begin{align*}
\text{ad}_w(u\circ v) &= \text{ad}_w(u)\circ v + u\circ \text{ad}_w(v) \quad \text{for all} \quad u, v \in A.
\end{align*}

Let’s compute $\text{ad}_w(u\circ v$:
\begin{align*}
\text{ad}_w(u\circ v) &= w\circ(u\circ v) - (u\circ v)\circ w.
\end{align*}
Using the Zinbiel identity  $(u\circ v)\circ w = u\circ(v\circ w)$, we get
\begin{align*}
\text{ad}_w(u\circ v) &= w\circ(u\circ v) - u\circ(v\circ w).
\end{align*}
Now, using the distributive property:
\begin{align*}
w(uv) &= (w\circ u)\circ v \quad \text{and} \quad u\circ(v\circ w) = u\circ(v\circ w).
\end{align*}
Thus, we have
\begin{align*}
\text{ad}_w(u\circ v) &= (w\circ u)\circ v - u\circ (v\circ w).
\end{align*}
On the other hand, we compute $\text{ad}_w(u)$ and $\text{ad}_w(v)$ :
\begin{align*}
\text{ad}_w(u) &= w\circ u - u\circ w \quad \text{and} \quad \text{ad}_w(v) = w\circ v - v\circ w.
\end{align*}
Therefore, we have
\begin{align*}
\text{ad}_w(u)\circ v + u\text{ad}_w(v) &= (w\circ u - u\circ w)\circ v + u\circ (w\circ v - v\circ w).
\end{align*}
Expanding both terms:
\begin{align*}
(w\circ u)\circ v - (u\circ w)\circ v + u\circ(w\circ v) - u\circ(v\circ w).
\end{align*}
This simplifies to
\begin{align*}
(w\circ u)\circ v + u\circ (w\circ v) - (u\circ w)\circ v - u\circ (v\circ w),
\end{align*}
which is exactly the expression for $\text{ad}_w(u\circ v)$.

Hence, we have shown that  
\begin{align*}
\text{ad}_w(u\circ v) &= \text{ad}_w(u)\circ v + u\circ\text{ad}_w(v),
\end{align*}
proving that  $\text{ad}_w$ is a derivation of $A$. 
\end{proof}

\begin{prop}
Let  $d_1, d_2, \dots, d_k$ be inner derivations of a Zinbiel algebra  $A$. Then the linear combination 
\begin{align*}
d &= \alpha_1 d_1 + \alpha_2 d_2 + \cdots + \alpha_k d_k
\end{align*}
is also an inner derivation of $A$.
\end{prop}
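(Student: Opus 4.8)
The plan is to exploit the fact that the assignment $w \mapsto \text{ad}_w$ is itself a linear map from $A$ into $\operatorname{Der}(A)$, so that any linear combination of inner derivations is again of the form $\text{ad}_w$ for a single suitable element $w$. Since the inner derivations are precisely the maps in the image of this linear assignment, the statement reduces to the closure of a linear image under taking linear combinations.

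First I would unwind the hypothesis: by the definition of inner derivation, each $d_i$ being inner means there exist elements $w_1, w_2, \dots, w_k \in A$ with $d_i = \text{ad}_{w_i}$, where $\text{ad}_{w_i}(u) = u \circ w_i - w_i \circ u$ for all $u \in A$. The goal is then to produce a single element $w \in A$ such that $d = \text{ad}_w$.

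The key step is to verify that the subscript of $\text{ad}$ depends linearly on its argument, i.e. that
$$\text{ad}_{\alpha w + \beta w'}(u) = \alpha\, \text{ad}_{w}(u) + \beta\, \text{ad}_{w'}(u) \quad \text{for all } \alpha, \beta \in \mathbb{C},\ w, w', u \in A.$$
This follows immediately from the bilinearity of the Zinbiel product $\circ$: evaluating the left-hand side as $u \circ (\alpha w + \beta w') - (\alpha w + \beta w') \circ u$ and expanding each factor via bilinearity produces exactly the right-hand side. Since, by the preceding proposition, each $\text{ad}_w$ is a genuine derivation of $A$, this confirms that the linear span of inner derivations consists again of maps of the form $\text{ad}_w$.

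Finally I would set $w = \alpha_1 w_1 + \alpha_2 w_2 + \cdots + \alpha_k w_k$, which lies in $A$ because $A$ is a vector space, and apply the linearity established above to obtain
$$d = \alpha_1 \text{ad}_{w_1} + \cdots + \alpha_k \text{ad}_{w_k} = \text{ad}_{\alpha_1 w_1 + \cdots + \alpha_k w_k} = \text{ad}_w,$$
so that $d = \text{ad}_w$ is an inner derivation of $A$, as required. I do not anticipate any genuine obstacle here: the whole argument rests on the bilinearity of $\circ$, and the only point deserving care is the explicit check that $w \mapsto \text{ad}_w$ is linear, which is routine.
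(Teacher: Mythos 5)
Your proposal is correct and follows essentially the same route as the paper: both write each $d_i$ as $\mathrm{ad}_{w_i}$, use bilinearity of the product to see that $w \mapsto \mathrm{ad}_w$ is linear, and conclude that $d = \mathrm{ad}_w$ with $w = \alpha_1 w_1 + \cdots + \alpha_k w_k$. No substantive difference.
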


\begin{proof} 
Let $d_i = \text{ad}_{w_i}$ for each  $i = 1, 2, \dots, k$, where  $w_i \in A$  and  $\text{ad}_{w_i}(u) = [w_i, u]$  for all  $u \in A$. Thus, the inner derivations $d_1, d_2, \dots, d_k$ are given by  $d_i(u) = [w_i, u]$  for each  $i$.

Now consider the linear combination of these derivations:
\begin{align*}
d &= \alpha_1 d_1 + \alpha_2 d_2 + \cdots + \alpha_k d_k.
\end{align*}
For any  $u \in A$, we have
\begin{align*}
d(u) &= \alpha_1 d_1(u) + \alpha_2 d_2(u) + \cdots + \alpha_k d_k(u).
\end{align*}
Substituting the definition of each $d_i(u)$ , we get
\begin{align*}
d(u) &= \alpha_1 [w_1, u] + \alpha_2 [w_2, u] + \cdots + \alpha_k [w_k, u].
\end{align*}
This can be written as
\begin{align*}
d(u) &= [\alpha_1 w_1 + \alpha_2 w_2 + \cdots + \alpha_k w_k, u].
\end{align*}
Thus, the element  $w = \alpha_1 w_1 + \alpha_2 w_2 + \cdots + \alpha_k w_k$ is in $A$, and we can express  $d$ as the inner derivation  d(u) = [w, u].

Therefore, \( d \) is an inner derivation of \( A \), and the proof is complete.
\end{proof}
\section{ An algorithm for finding inner derivations}

Let  $A$  be an  $n$-dimensional associative algebra. Fix a basis  $\{e_1, e_2, \dots, e_n\}$ of  $A$  and a vector  $w = a_1 e_1 + a_2 e_2 + \dots + a_n e_n$ in  $A$. Let $ad_w$  be an inner derivation of  $A$. In particular, we have:

\begin{align*}
ad_w(e_i) &= e_i\circ w - w\circ e_i, \quad \forall i \in \{1, 2, \dots, n\}.
\end{align*}

Moreover, $ad_w$ can be considered as a linear transformation of  $A$, represented in a matrix form:

\begin{align*}
ad_w(e_i) &= \sum_{j=1}^n d_{ij} e_j, \quad i = 1, 2, \dots, n.
\end{align*}

This leads to:

\begin{align*}
\sum_{j=1}^n d_{ij} e_j &= e_i\circ w - w\circ e_i, \quad i = 1, 2, \dots, n.
\end{align*}

Then, we obtain the following system of equations for the coefficients  $d_{ij}$:

\begin{align*}
d_{ij} &= \sum_{t=1}^n a_t \gamma_{it}^j - \sum_{t=1}^n a_t \gamma_{ti}^j, \quad i, j = 1, 2, \dots, n. 
\end{align*}

Solving this system of equation, we find the inner derivation in matrix form.

\begin{thm}
Let $A$ be a two-dimensional complex Zinbiel algebra. Then it is isomorphic to one of the following pairwise nonisomorphic Zinbiel algebras $A_2^{1}:\;e_1\circ e_1=e_2$.
\end{thm}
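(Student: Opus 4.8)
The plan is to recast the classification as an orbit problem for structure constants. First I would fix the basis $\{e_1,e_2\}$ and record an arbitrary bilinear product by its eight structure constants $\gamma_{ij}^k\in\mathbb{C}$, determined by $e_i\circ e_j=\gamma_{ij}^1 e_1+\gamma_{ij}^2 e_2$. Because the defining identity $(u\circ v)\circ w=u\circ(v\circ w)+u\circ(w\circ v)$ is multilinear, it is equivalent to its restriction to all basis triples $(e_i,e_j,e_k)$. Expanding each side in the basis and matching the coefficients of $e_1$ and $e_2$ produces a finite system of quadratic equations in the $\gamma_{ij}^k$, whose zero set is precisely the variety of two-dimensional Zinbiel products.

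Next I would solve this quadratic system directly. In dimension two the Zinbiel identity is very rigid: the associativity-type relations force the product to take values in a one-dimensional subspace and pin down most of the $\gamma_{ij}^k$ in terms of a few free parameters, so that the admissible products reduce to a short parametrized list before any basis change is performed. I expect this bookkeeping to be elementary but slightly tedious, and it is the step most prone to sign or indexing errors.

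The reduction to canonical form comes next. The group $GL_2(\mathbb{C})$ acts on the solution variety through the change of basis $e_i\mapsto\sum_j g_{ij}e_j$, and two Zinbiel algebras are isomorphic exactly when their structure-constant tuples lie in one orbit. I would normalize by taking $e_2$ to span the square $A\circ A$ whenever that square is one-dimensional, and then rescaling $e_1$ so that the surviving coefficient becomes $1$; this yields the representative $A_2^1:\;e_1\circ e_1=e_2$, with the abelian algebra (all products zero) covering the degenerate case where the square vanishes.

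The main obstacle is not computing the orbits but certifying that the chosen representatives are pairwise nonisomorphic. For this I would exhibit isomorphism invariants that separate the candidates, such as $\dim(A\circ A)$, the dimensions of the annihilators $Ann_L(A)$ and $Ann_R(A)$ introduced above, or $\dim\Der(A)$, and verify that the representatives realize distinct values. Finiteness of the list, together with the fact that every remaining free parameter can be scaled to a normal value, is what secures the claimed classification into finitely many pairwise nonisomorphic algebras.
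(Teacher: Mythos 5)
The paper itself gives no proof of this theorem: the classification is imported from the literature (Omirov's classification of two-dimensional complex Zinbiel algebras, item [13] of the bibliography), and the proof environment that follows in the source belongs to the next theorem, on inner derivations. So there is no in-paper argument to compare yours against. Your outline --- encode the product by structure constants $\gamma_{ij}^k$, impose the Zinbiel identity on all basis triples to cut out a quadratic variety, and classify $GL_2(\mathbb{C})$-orbits by choosing a basis adapted to $A\circ A$ --- is the standard method and is, in substance, how the cited source proceeds. You are also right to flag that the abelian algebra must appear on the list (or the statement must be read as classifying only the non-abelian algebras), a point the theorem as printed glosses over.

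As written, however, your text is a plan rather than a proof: the decisive step is asserted, not carried out. You claim that ``the associativity-type relations force the product to take values in a one-dimensional subspace,'' but that is precisely the content to be established (equivalently, that $A\circ A\neq A$, which for Zinbiel algebras follows from nilpotency or, in dimension two, from a short direct computation). Once $e_2$ is chosen to span $A\circ A$, one must still run the identity through the basis triples to force $e_1\circ e_2=e_2\circ e_1=e_2\circ e_2=0$ and $e_1\circ e_1=\alpha e_2$ with $\alpha\neq 0$, after which rescaling $e_2$ yields $A_2^1$. None of this is long, but until it is written out the completeness of the list --- the only nontrivial part of the claim, since ``pairwise nonisomorphic'' is vacuous for a one-element list once the abelian case is separated by $\dim(A\circ A)$ --- remains unverified.
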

\begin{thm}
 The inner derivations of two-dimensional complex zinbiel algebras are given as follows:    
\end{thm}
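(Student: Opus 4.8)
The plan is to apply the algorithm developed at the start of this section directly to the single algebra $A_2^1$, reading off its structure constants and substituting them into the coefficient formula $d_{ij} = \sum_{t} a_t \gamma_{it}^j - \sum_{t} a_t \gamma_{ti}^j$. First I would fix the basis $\{e_1, e_2\}$ and record the structure constants of $A_2^1$: the only nonzero product is $e_1 \circ e_1 = e_2$, so $\gamma_{11}^2 = 1$ and $\gamma_{it}^j = 0$ for every other triple $(i,t,j)$. Writing a generic element as $w = a_1 e_1 + a_2 e_2$, the inner derivation $ad_w$ is then completely determined by its action on the two basis vectors, and it suffices to compute $ad_w(e_1)$ and $ad_w(e_2)$.

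Next I would compute $ad_w(e_1) = e_1 \circ w - w \circ e_1$ and $ad_w(e_2) = e_2 \circ w - w \circ e_2$ term by term. Expanding $e_1 \circ w = a_1 (e_1 \circ e_1) + a_2 (e_1 \circ e_2) = a_1 e_2$ and $w \circ e_1 = a_1 (e_1 \circ e_1) + a_2 (e_2 \circ e_1) = a_1 e_2$, the two contributions cancel, so $ad_w(e_1) = 0$; since $e_2$ appears in no nonzero product on either side, $ad_w(e_2) = 0$ as well. Equivalently, substituting the structure constants into the coefficient formula gives $d_{ij} = a_1 \gamma_{i1}^j + a_2 \gamma_{i2}^j - a_1 \gamma_{1i}^j - a_2 \gamma_{2i}^j$, and because the only nonzero structure constant $\gamma_{11}^2$ is symmetric in its two lower indices, each $d_{ij}$ vanishes identically in $a_1$ and $a_2$.

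From this I would conclude that $ad_w$ is the zero map for every $w \in A_2^1$, so that its matrix is $\begin{pmatrix} 0 & 0 \\ 0 & 0 \end{pmatrix}$ and hence $Inn(A_2^1) = \{0\}$. The conceptual reason behind the computation, which I would state explicitly, is that the bracket $[u,v] = u \circ v - v \circ u$ vanishes on all of $A_2^1$, since its only nonzero product $e_1 \circ e_1$ is symmetric; the algebra is therefore commutative in this sense and its associated Lie algebra is abelian, while inner derivations are built precisely from this bracket. There is no genuine obstacle here beyond careful bookkeeping of the structure constants; the only point requiring attention is consistency with the sign convention for $ad_w$ adopted in the algorithm, but since the bracket is identically zero the conclusion is independent of that choice.
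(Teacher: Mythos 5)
Your proposal is correct and follows essentially the same route as the paper: fix the basis, expand $ad_w(e_1)$ and $ad_w(e_2)$ using the single nonzero product $e_1\circ e_1=e_2$, observe the cancellation, and conclude that $ad_w$ is the zero matrix for every $w$. Your added remark that the vanishing is forced because the only nonzero structure constant is symmetric in its lower indices (so the commutator bracket is identically zero) is a nice conceptual summary but does not change the argument.
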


\[
\text{Table 1: inner derivations of two-dimensional complex Zinbiel algebras}
\]
\[
\begin{array}{|c|c|c|}
\hline
\hline
\textbf{Isomorphism Class} & \textbf{Inner Derivation} & \textbf{Dimension} \\
\hline
A_2^{1} & 
\left(\begin{array}{cc} 
0 & 0 \\
0 & 0 
\end{array}\right) & 0 \\
\hline
\end{array}
\]

\begin{proof}
Let  $A_2^1$  be a two-dimensional Zinbiel algebra. Fix a basis  $\{e_1, e_2\}$  of  $A_2^1$. The multiplication in  $A_2^1$ is defined by  $e_1\circ e_1 = e_2$. Let $w = ae_1 + be_2 \in A_2^1$. Define 
$ad_w$  as the inner derivation of  $A_2^1$, given by  
\begin{align*}
ad_w(e_i) &= w\circ e_i - e_i\circ w, \quad \forall e_i \in A_2^1, \ i = 1, 2.
\end{align*}  
In particular, for $i = 1, 2$,  
\begin{align*}
ad_w(e_i) &= \sum_{j=1}^2 a_{ij} e_j,
\end{align*}  
where $ad_w$ can be represented as a linear transformation of $A_2^1$ in the matrix form  
\begin{align*}
ad_w &= \begin{pmatrix} 
a_{11} & a_{12} \\ 
a_{21} & a_{22} 
\end{pmatrix}.
\end{align*}  
The definition of $ad_w$ implies  
\begin{align*}
w\circ e_i - e_i\circ w = \sum_{j=1}^2 a_{ij} e_j, \quad \forall i = 1, 2.
\end{align*} 

Let us compute $ad_w$  explicitly for $A_2^1$:  
\begin{enumerate}
    \item Since  $e_1\circ e_1 = e_2$, we have  
    \begin{align*}
    w\circ  e_1 &= (ae_1 + be_2)\circ e_1 = a(e_1\circ e_1) + b(e_2\circ e_1) = ae_2,  
    \end{align*} 
    and  
    \begin{align*}
    e_1\circ w = e_1\circ (ae_1 + be_2) &= a(e_1\circ e_1) + b(e_1\circ e_2) = ae_2.
    \end{align*}  
    Thus,  
    \begin{align*}
    ad_w(e_1) &= w\circ e_1 - e_1\circ w = ae_2 - ae_2 = 0.
    \end{align*}
    \item For  $e_2$, note that  $e_2\circ e_1 = e_1\circ e_2 = 0$ and $e_2\circ e_2 = 0$, so  
    \begin{align*}
    w\circ e_2 &= (ae_1 + be_2)\circ e_2 = 0, \quad e_2 w = e_2\circ (ae_1 + be_2) = 0.
   \end{align*}  
    Thus,  
    \begin{align*}
    ad_w(e_2) = w\circ e_2 - e_2\circ w = 0.
    \end{align*}
\end{enumerate}

From the above, the matrix representation of $ad_w$ is  
\begin{align*}
ad_w &= \begin{pmatrix} 
0 & 0 \\ 
0 & 0 
\end{pmatrix}.
\end{align*}  

Therefore, for  $A_2^1$, the inner derivations are trivial, represented by the zero matrix:  
\begin{align*}
ad_{e_1} &= \begin{pmatrix} 
0 & 0 \\ 
0 & 0 
\end{pmatrix}, \quad \mathrm{ad}_{e_2} = \begin{pmatrix} 
0 & 0 \\ 
0 & 0 
\end{pmatrix}.
\end{align*}  

Thus, the basis of inner derivations for  $A_2^1$ is given by the zero derivation.  
\end{proof}

\begin{thm}
Any $3$-dimensional Zinbiel algebra $A$ isomorphic to one of following non-isomorphic Zinbiel algebras\\
$A_3^{1}:\;e_i\circ e_j=0$\\
$A_3^{2}:\;e_1\circ e_1=e_3$\\
$A_3^{3}:\;e_1\circ e_1=e_3, e_2\circ e_2=e_3$\\
$A_3^{4}:\;e_1\circ e_2=\frac{1}{2}e_3, e_2\circ e_1=\frac{-1}{2}e_3$\\
$A_3^{5}:\;e_2\circ e_1=e_3$\\
$A_3^{6}:\;e_1\circ e_1=e_3,e_1\circ e_2=e_3,e_2\circ e_2=\lambda e_3, \quad \lambda  \neq 0 $\\
$A_3^{7}:\;e_1\circ e_1=e_2,e_1\circ e_2=\frac{1}{2}e_3,e_2\circ e_1=e_3$
\end{thm}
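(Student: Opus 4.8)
The plan is to reduce the classification using nilpotency and then stratify by the dimension of the derived subalgebra $A^2 = A \circ A$. Every finite-dimensional complex Zinbiel algebra is nilpotent, so $A^2$ is a proper subspace of $A$ and $\dim A^2 \in \{0,1,2\}$. First I would fix a basis $\{e_1,e_2,e_3\}$ adapted to the lower central filtration and write $e_i \circ e_j = \sum_{k} \gamma_{ij}^{k} e_k$, so that the task becomes normalizing the structure constants $\gamma_{ij}^{k}$ under the change-of-basis action of $GL_3(\mathbb{C})$, subject to the Zinbiel identity $(u\circ v)\circ w = u\circ(v\circ w) + u\circ(w\circ v)$. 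The non-isomorphism of the resulting list would be confirmed at the end using invariants such as $\dim A^2$ and the symmetry type of the product.

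The case $\dim A^2 = 0$ is immediate, giving the abelian algebra $A_3^{1}$. The case $\dim A^2 = 1$ is the main bulk. Here $A$ is two-step nilpotent, so $A^2 \circ A = A \circ A^2 = 0$, and one checks directly that both sides of the Zinbiel identity then vanish identically; hence the identity imposes no further constraint and any bilinear product is admissible. Choosing $e_3$ to span $A^2$, the multiplication is encoded by the bilinear form on $A/A^2 \cong \langle e_1,e_2\rangle$ whose matrix $B$ satisfies $e_i \circ e_j = B_{ij}\, e_3$. Two such algebras are isomorphic exactly when their matrices are equivalent under $B \mapsto c\, P^{\top} B P$ with $P \in GL_2(\mathbb{C})$ and $c \in \mathbb{C}^{*}$. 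I would split $B = S + K$ into symmetric and antisymmetric parts, which transform via $S \mapsto c\, P^{\top} S P$ and $K \mapsto c \det(P)\, K$ (using $P^{\top} J P = \det(P) J$ for the standard skew matrix $J$). Over $\mathbb{C}$ a symmetric form is determined by its rank, so the pure-symmetric cases give $A_3^{2}$ (rank one) and $A_3^{3}$ (rank two), the pure-antisymmetric case gives $A_3^{4}$, and in the mixed case the quantity $\det(S)/k^{2}$, where $K = kJ$, is invariant: it separates $A_3^{5}$ from the genuine one-parameter family $A_3^{6}$, whose members are pairwise non-isomorphic since distinct $\lambda$ yield distinct values of this invariant.

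For $\dim A^2 = 2$ the algebra is three-step nilpotent and the Zinbiel identity becomes decisive. I would first show $A$ is generated by a single element, say $e_1$, and set $e_2 = e_1 \circ e_1$ and $e_3 = e_1 \circ e_2$. Specializing the identity to $u=v=w=e_1$ gives $(e_1\circ e_1)\circ e_1 = 2\,e_1\circ(e_1\circ e_1)$, that is $e_2 \circ e_1 = 2\,(e_1 \circ e_2)$; after normalizing $e_1 \circ e_2 = \tfrac12 e_3$ this forces $e_2 \circ e_1 = e_3$ and produces exactly $A_3^{7}$, with all remaining products lying in $A^4 = 0$.

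The hard part will be the $\dim A^2 = 1$ stratum: correctly carrying out the simultaneous (rather than independent) normalization of $S$ and $K$ under a single $P$, identifying $\det(S)/k^{2}$ as the complete invariant of the mixed case, and verifying both that the family $A_3^{6}$ is exhausted and that no canonical form is duplicated or omitted. Establishing nilpotency at the outset is a known result that can be cited, and the $\dim A^2 = 2$ computation is short once the single generator is fixed, so the bilinear-form analysis is where the classification genuinely has to be pinned down.
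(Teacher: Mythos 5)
The paper does not actually prove this theorem: the list of three-dimensional Zinbiel algebras is recalled verbatim from the cited classification literature (\cite{12,13,14,16}), so there is no in-paper argument to compare against. Your proposal is therefore a genuinely different (and more self-contained) route, and its skeleton is sound. I checked the step you flag as the hard part: in the stratum $\dim A^2=1$, nilpotency does force $A^3=0$, the Zinbiel identity is indeed vacuous there, and the orbit analysis of $B\mapsto c\,P^{\top}BP$ works out --- after normalizing $k=\tfrac12$ the residual group acts by $S\mapsto P^{\top}SP/\det P$, which preserves $\det S$, and one orbit corresponds to each value of $\det(S)/k^{2}$ (one orbit for $\det S\neq 0$ via congruence of nondegenerate symmetric forms with the sign of $\det P$ adjusted by $iI$, and one orbit for $S=vv^{\top}$ of rank one). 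Concretely $A_3^{5}$ has $B=\left(\begin{smallmatrix}0&0\\1&0\end{smallmatrix}\right)$, hence $S$ of rank two and invariant $-1$, while $A_3^{6}$ realizes the invariant $4\lambda-1$, which ranges over all values except $-1$ as $\lambda$ ranges over $\mathbb{C}^{*}$; so the mixed case is exhausted with no duplication. The $\dim A^2=2$ case also closes correctly: $\dim A/A^2=1$ forces a single generator, and $u=v=w=e_1$ in the Zinbiel identity gives $e_2\circ e_1=2\,e_1\circ e_2$, yielding exactly $A_3^{7}$. What your approach buys over the paper's bare citation is a verifiable derivation together with explicit invariants ($\dim A^2$, vanishing of $S$ or $K$, rank of $S$, and $\det(S)/k^{2}$) certifying pairwise non-isomorphism; the only ingredient you must still import is the nilpotency of finite-dimensional complex Zinbiel algebras, which is a nontrivial external theorem and should be cited explicitly rather than treated as folklore.
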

\begin{thm}
The inner derivations of three-dimensional complex zinbiel algebras are given as follows:     
\end{thm}
\[
\text{Table 2: The inner derivations of three-dimensional zinbiel algebras}
\]
\[
\begin{array}{|c|c|c|}
\hline
\hline
\textbf{Isomorphism Class} & \textbf{Inner Derivation} & \textbf{Dimension} \\
\hline
A_3^{1}
&\left(\begin{array}{ccc}
0&0&0\\
0&0&0\\
0&0&0
\end{array}\right) & 0 \\
\hline
A_3^{2}
&\left(\begin{array}{ccc}
0&0&0\\
0&0&0\\
0&0&0
\end{array}\right) & 0 \\
\hline
A_3^{3}
&\left(\begin{array}{ccc}
0&0&0\\
0&0&0\\
0&0&0
\end{array}\right) & 0 \\
\hline
A_3^{4}
&\left(\begin{array}{ccc}
0&0&0\\
0&0&0\\
a_2&-a_1&0
\end{array}\right) & 2 \\
\hline
A_3^{5}
&\left(\begin{array}{ccc}
0&0&0\\
0&0&0\\
-a_2&a_1&0
\end{array}\right) & 2 \\
\hline
A_3^{6}
&\left(\begin{array}{ccc}
0&0&0\\
0&0&0\\
a_2&-a_1&0
\end{array}\right) & 2 \\
\hline
A_3^{7}
&\left(\begin{array}{ccc}
0&0&0\\
0&0&0\\
0&0&0
\end{array}\right) & 0 \\
\hline
\end{array}
\]

\begin{proof}
Let $A_3^4$ be a three-dimensional Zinbiel algebra with basis $\{e_1, e_2, e_3\}$, and consider the multiplication rule defined by:
\begin{align*}
e_1\circ e_2 &= \frac{1}{2} e_3, \quad e_2\circ e_1 = -\frac{1}{2} e_3.
\end{align*}

Fix a vector $w = a_1 e_1 + a_2 e_2 + a_3 e_3 \in A_3^4$. By the definition of the inner derivation $\text{ad}_w$ of $A_3^4$, we have:
\begin{align*}
\text{ad}_w(u) &= [w, u] = w \circ u - u \circ w, \quad \text{for all } u \in A_3^4.
\end{align*}

Using the Zinbiel algebra relations, the multiplication $w \circ u$ is bilinear and can be computed directly for each $u = e_i$ ($i = 1, 2, 3$). Let us compute $\text{ad}_w(e_i)$ for $i = 1, 2, 3$.

\paragraph{Case $u = e_1$:}
\begin{align*}
w \circ e_1 &= a_2 (e_2 \circ e_1) = -\frac{1}{2} a_2 e_3, \quad
e_1 \circ w = a_2 (e_1 \circ e_2) = \frac{1}{2} a_2 e_3.
\end{align*}
Thus,
\begin{align*}
\text{ad}_w(e_1) &= w \circ e_1 - e_1 \circ z = -\frac{1}{2} a_2 e_3 - \frac{1}{2} a_2 e_3 = -a_2 e_3.
\end{align*}

\paragraph{Case $u = e_2$:}
\begin{align*}
w \circ e_2 &= a_1 (e_1 \circ e_2) = \frac{1}{2} a_1 e_3, \quad
e_2 \circ w = a_1 (e_2 \circ e_1) = -\frac{1}{2} a_1 e_3.
\end{align*}
Thus,
\begin{align*}
\text{ad}_w(e_2) &= w \circ e_2 - e_2 \circ w = \frac{1}{2} a_1 e_3 - \left(-\frac{1}{2} a_1 e_3\right) = a_1 e_3.
\end{align*}

\paragraph{Case $u = e_3$:}
Since $e_3 \circ e_i = 0$ for all $i$, it follows that:
\begin{align*}
\text{ad}_w(e_3) &= w \circ e_3 - e_3 \circ w = 0.
\end{align*}

\paragraph{Combining the cases:}
The inner derivation $\text{ad}_w$ is given by:
\begin{align*}
\text{ad}_z &= \begin{pmatrix}
0 & 0 & 0 \\
0 & 0 & 0 \\
a_2 & -a_1 & 0
\end{pmatrix}.
\end{align*}

Thus, the inner derivation of the Zinbiel algebra $A_3^4$ is as required.
\end{proof}

\begin{thm}
Any $4$-dimensional Zinbiel algebra $A$ isomorphic to one of following non-isomorphic Zinbiel algebras\\
$A_4^1:\;e_1\circ e_1=e_2, e_1\circ e_2=e_3,e_2\circ e_1=2e_3,e_1\circ e_3=e_4, e_2\circ e_2=3e_4, e_3\circ e_1=3e_4 $\\
$A_4^{2}:\;e_1\circ e_1=e_3, e_1\circ e_2=e_4,e_1\circ e_3=e_4,e_3\circ e_1=2e_4$\\
$A_4^3:\;e_1\circ e_1=e_3, e_1\circ e_3=e_4,e_2\circ e_2=e_4,e_3\circ e_1=2e_4$\\
$A_4^{4}:\;e_1\circ e_2=e_3, e_1\circ e_3=e_4,e_2\circ e_1=-e_3$\\
$A_4^5:\;e_1\circ e_2=e_3, e_1\circ e_3=e_4,e_2\circ e_1=-e_3,e_2\circ e_2=e_4$\\
$A_4^{6}:\;e_1\circ e_1=e_4, e_1\circ e_2=e_3,e_2\circ e_1=-e_3,e_2\circ e_2=-2e_3+e_4$\\
$A_4^7:\;e_1\circ e_2=e_3, e_2\circ e_1=e_4,e_2\circ e_2=-e_3$\\
$A_4^{8}:\;e_1\circ e_1=e_3, e_1\circ e_2=e_4,e_2\circ e_1=-\alpha e_3,e_2\circ e_2=-e_4$\\
$A_4^9:\;e_1\circ e_1=e_4, e_1\circ e_2=\alpha e_4,e_2\circ e_1=-\alpha e_4,e_2\circ e_2=e_4,e_3 e_3=e_4$\\
$A_4^{10}:\;e_1\circ e_1=e_4, e_1\circ e_3=e_4,e_2\circ e_1=-e_4,e_2\circ e_2=e_4,e_3\circ e_1=e_4$\\
$A_4^{11}:\;e_1\circ e_1=e_4, e_1\circ e_2=e_4,e_2\circ e_1=-e_4,e_3\circ e_3=e_4$\\
$A_4^{12}:\;e_1\circ e_2=e_3, e_2\circ e_1=e_4$\\
$A_4^{13}:\;e_1\circ e_2=e_3, e_2\circ e_1=e_4$\\
$A_4^{14}:\;e_1\circ e_2=e_3, e_2\circ e_1=e_4$\\
$A_4^{15}:\;e_1\circ e_2=e_3, e_2\circ e_1=e_4$\\
$A_4^{16}:\;e_1\circ e_2=e_3, e_2\circ e_1=e_4$
\end{thm}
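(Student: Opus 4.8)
The plan is to classify all $4$-dimensional complex Zinbiel algebras up to isomorphism by the central-extension (Skjelbred--Sund) method, bootstrapping from the classification of $3$-dimensional Zinbiel algebras furnished by the preceding theorem. The structural input I would rely on is the known fact that every finite-dimensional complex Zinbiel algebra is nilpotent; consequently every $4$-dimensional such algebra $A$ with nontrivial product arises as a one-dimensional central extension of a nilpotent Zinbiel algebra $B$ of dimension $3$ (or lower, when the center of $A$ is larger) by the trivial module $\mathbb{C}$. First I would fix a basis $\{e_1,e_2,e_3,e_4\}$, record the product through structure constants $e_i\circ e_j=\sum_k \gamma_{ij}^k e_k$, and translate the Zinbiel identity $(e_i\circ e_j)\circ e_k = e_i\circ(e_j\circ e_k)+e_i\circ(e_k\circ e_j)$ into a system of quadratic equations in the $\gamma_{ij}^k$.

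For the extension step I would set up the Zinbiel $2$-cochain and $2$-coboundary spaces with values in $\mathbb{C}$, compute the second cohomology $H^2(B,\mathbb{C})$ for each $3$-dimensional base $B$ listed in the previous theorem, and then let the automorphism group $\mathrm{Aut}(B)$ act on the relevant subspaces of $H^2(B,\mathbb{C})$. Each orbit of a suitable cocycle satisfying the nondegeneracy condition that its radical meets the center of $B$ trivially (so that the extension produces no superfluous central summand) yields exactly one isomorphism class of $4$-dimensional algebra. Sweeping this construction over all base algebras $B$ and all larger-center degenerations produces a finite list of candidate representatives.

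To organize the list and rule out collisions, I would stratify the candidates by a battery of isomorphism invariants: the dimension of the square $A^2=A\circ A$, the dimensions of the right and left annihilators $\mathrm{Ann}_R(A)$ and $\mathrm{Ann}_L(A)$, the nilpotency index, and the dimension of $\mathrm{Der}(A)$. Within each fixed-invariant stratum I would normalize the surviving structure constants by a filtration-compatible change of basis $g\in GL_4(\mathbb{C})$, rescaling and clearing parameters until a canonical form is reached; this is precisely the mechanism by which the representatives $A_4^1,\dots,A_4^{16}$ and the parametrized families (e.g.\ $A_4^{8}$ and $A_4^{9}$, each carrying a scalar $\alpha$) emerge, and it is also where one decides which parameter values collapse to isomorphic algebras.

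The main obstacle is twofold. The hard part will be, first, \emph{completeness}: one must be certain that no orbit has been missed, which requires tracking the $\mathrm{Aut}(B)$-action on $H^2(B,\mathbb{C})$ uniformly across every base algebra, a bookkeeping-intensive computation best organized and cross-checked with computer algebra. Second, and more delicate, is \emph{pairwise non-isomorphism}: the invariants above must genuinely separate the final representatives. Several table entries share the same multiplication shape (for instance $A_4^{12}$ through $A_4^{16}$ are each recorded as $e_1\circ e_2=e_3,\ e_2\circ e_1=e_4$), so showing these are truly distinct classes, or else correcting and consolidating the list, will demand a finer invariant than $A^2$ and the annihilators alone; I expect to need the full $GL_4(\mathbb{C})$-orbit analysis or an explicit obstruction—such as the simultaneous conjugacy type of the left/right multiplication operators—to settle these borderline cases.
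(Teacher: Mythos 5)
The paper gives no proof of this theorem at all: the list of $4$-dimensional Zinbiel algebras is imported wholesale from the classification literature it cites (notably Adashev--Khudoyberdiyev--Omirov and Alvarez--Fern\'andez--Kaygorodov), so there is no internal argument to compare yours against. Your proposed route --- use the nilpotency of finite-dimensional complex Zinbiel algebras, realize each $4$-dimensional algebra as a one-dimensional central (annihilator) extension of a $3$-dimensional one, compute $H^2(B,\mathbb{C})$, and quotient by the $\mathrm{Aut}(B)$-action --- is in fact the Skjelbred--Sund method actually used in those references, so the strategy is the correct one. But as written it is a plan rather than a proof: none of the cocycle spaces, coboundary spaces, or orbit decompositions are computed for any of the seven base algebras, and the two steps you yourself flag as the hard part (completeness of the orbit list and pairwise non-isomorphism of the representatives) are precisely the content of the theorem. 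One technical slip: the nondegeneracy condition in the Skjelbred--Sund construction should be phrased via the annihilator of $B$ (the radical of the cocycle must meet $\mathrm{Ann}(B)$ trivially, so no one-dimensional annihilator summand splits off), not via the center.

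You are also right to be suspicious of the statement itself, and this is worth making explicit: as printed, $A_4^{12}$ through $A_4^{16}$ carry the identical multiplication table $e_1\circ e_2=e_3$, $e_2\circ e_1=e_4$, so they cannot be pairwise non-isomorphic and no proof can establish the theorem in this form. The paper's own Table~3 betrays the corruption: those five classes are assigned different inner-derivation matrices, and the entry for $A_4^{15}$ involves a parameter $\alpha$ that does not appear in its stated product. The source classification has distinct (partly parametrized) multiplication tables for these families, and any faithful execution of your program would recover those corrected tables rather than the list as stated.
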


\begin{thm}
The inner derivations of three-dimensional complex zinbiel algebras are given as follows:     
\end{thm}
\[
\text{Table 3: The inner derivations of three-dimensional zinbiel algebras}
\]
\[
\begin{array}{|c|c|c|}
\hline
\hline
\textbf{Isomorphism Class} & \textbf{Inner Derivation} & \textbf{Dimension} \\
\hline
A_4^{1}
&\left(\begin{array}{cccc}
0&0&0&0\\
0&0&0&0\\
-a_2&a_1&0&0\\
-2a_3&0&2a_1&0
\end{array}\right) & 3 \\
\hline
A_4^{2}
&\left(\begin{array}{cccc}
0&0&0&0\\
0&0&0&0\\
0&0&0&0\\
a_2-a_3&-a_1&a_1&0
\end{array}\right) & 3 \\
\hline
A_4^{3}
&\left(\begin{array}{cccc}
0&0&0&0\\
0&0&0&0\\
0&0&0&0\\
-a_3&0&a_1&0
\end{array}\right) & 2 \\
\hline
A_4^{4}
&\left(\begin{array}{cccc}
0&0&0&0\\
0&0&0&0\\
2a_2&-2a_1&0&0\\
a_3&0&-a_1&0
\end{array}\right) & 3 \\
\hline
A_4^{5}
&\left(\begin{array}{cccc}
0&0&0&0\\
0&0&0&0\\
2a_2&-2a_1&0&0\\
a_3&0&-a_1&0
\end{array}\right) & 3 \\
\hline
A_4^{6}
&\left(\begin{array}{cccc}
0&0&0&0\\
0&0&0&0\\
2a_2&-2a_1&0&0\\
0&0&0&0
\end{array}\right) & 2 \\
\hline
A_4^{7}
&\left(\begin{array}{cccc}
0&0&0&0\\
0&0&0&0\\
a_2&-a_1&0&0\\
-a_2&a_1&0&0
\end{array}\right) & 2 \\
\hline
A_4^{8}
&\left(\begin{array}{cccc}
0&0&0&0\\
0&0&0&0\\
\alpha a_2&-\alpha a_1&0&0\\
a_2&-a_1&0&0
\end{array}\right)\; (\alpha \neq 0)
& 2 \\
&\left(\begin{array}{cccc}
0&0&0&0\\
0&0&0&0\\
0&0&0&0\\
a_2&-a_1&0&0
\end{array}\right)\; (\alpha = 0)
& 2 \\
\hline
A_4^{9}
&\left(\begin{array}{cccc}
0&0&0&0\\
0&0&0&0\\
0&0&0&0\\
2\alpha a_2&-2\alpha a_1&0&0
\end{array}\right)\; (\alpha \neq 0)
& 2 \\
&\left(\begin{array}{cccc}
0&0&0&0\\
0&0&0&0\\
0&0&0&0\\
0&0&0&0
\end{array}\right)\; (\alpha = 0)
& 0 \\
\hline
\end{array}
\]
\[
\begin{array}{|c|c|c|}
\hline
\hline
\textbf{Isomorphism Class} & \textbf{Inner Derivation} & \textbf{Dimension} \\
\hline
A_4^{10}
&\left(\begin{array}{cccc}
0&0&0&0\\
0&0&0&0\\
0&0&0&0\\
2a_2&-2a_1&0&0
\end{array}\right) & 2 \\
\hline
A_4^{11}
&\left(\begin{array}{cccc}
0&0&0&0\\
0&0&0&0\\
0&0&0&0\\
2a_2&-2a_1&0&0
\end{array}\right) & 2 \\
\hline
A_4^{12}
&\left(\begin{array}{cccc}
0&0&0&0\\
0&0&0&0\\
a_2&-a_1&0&0\\
-a_2&a_1&0&0
\end{array}\right) & 2 \\
\hline
A_4^{13}
&\left(\begin{array}{cccc}
0&0&0&0\\
0&0&0&0\\
2a_2&-2a_1&0&0\\
0&0&0&0
\end{array}\right) & 2 \\
\hline
A_4^{14}
&\left(\begin{array}{cccc}
0&0&0&0\\
0&0&0&0\\
0&0&0&0\\
-a_2&a_1&0&0
\end{array}\right) & 2 \\
\hline
A_4^{15}
&\left(\begin{array}{cccc}
0&0&0&0\\
0&0&0&0\\
0&0&0&0\\
a_2-\frac{1+\alpha}{1-\alpha}a_2&\frac{1+\alpha}{1-\alpha}a_1-a_1&0&0
\end{array}\right)\; (\alpha \neq -1)
& 2 \\
&\left(\begin{array}{cccc}
0&0&0&0\\
0&0&0&0\\
0&0&0&0\\
a_2&-a_1&0&0
\end{array}\right)\; (\alpha = -1)
& 2 \\
\hline
A_4^{16}
&\left(\begin{array}{cccc}
0&0&0&0\\
0&0&0&0\\
0&0&0&0\\
2a_2&-2a_1&0&0
\end{array}\right) & 2 \\
\hline
\end{array}
\]

\begin{proof}
Let $A_4^{14}$ be the four-dimensional complex Zinbiel algebra with the basis $\{e_1, e_2, e_3, e_4\}$, and the non-zero multiplication rules given by:  
\begin{align*}
e_1 \circ e_2 &= e_3, \quad e_2 \circ e_1 = e_4.
\end{align*}

The inner derivation associated with an element $w \in A$ is defined as:
\begin{align*}
\mathrm{ad}_w(u) &= w \circ u - u \circ w, \quad \text{for all } u \in A.
\end{align*}

Let $w = a_1 e_1 + a_2 e_2 + a_3 e_3 + a_4 e_4 \in A$. Using the basis, compute $\mathrm{ad}_w(e_i)$ for $i = 1, 2, 3, 4$, and express it in terms of $\{e_1, e_2, e_3, e_4\}$.

\paragraph{Computing $\mathrm{ad}_w(e_i)$:}  
From the algebra's multiplication rules, calculate $w \circ e_i - e_i \circ w$ explicitly:

\begin{itemize}
    \item \textit{For $i = 1$:}
    \begin{align*}
    w \circ e_1 &= 0, \quad e_1 \circ w = 0 \quad \Rightarrow \quad \mathrm{ad}_w(e_1) = 0.
    \end{align*}
    
    \item \textit{For $i = 2$:}
    \begin{align*}
    w \circ e_2 &= a_1 e_3, \quad e_2 \circ w = a_1 e_4 \quad \Rightarrow \quad \mathrm{ad}_w(e_2) = a_1(e_3 - e_4).
    \end{align*}
    
    \item \textit{For $i = 3$:}
    \begin{align*}
    w \circ e_3 &= 0, \quad e_3 \circ w = 0 \quad \Rightarrow \quad \mathrm{ad}_w(e_3) = 0.
    \end{align*}
    
    \item \textit{For $i = 4$:}
    \begin{align*}
    w \circ e_4 &= 0, \quad e_4 \circ w = 0 \quad \Rightarrow \quad \mathrm{ad}_w(e_4) = 0.
    \end{align*}
\end{itemize}

The inner derivation $\mathrm{ad}_w$ is represented as a $4 \times 4$ matrix in the basis $\{e_1, e_2, e_3, e_4\}$. From the above computations, the matrix form of $\mathrm{ad}_w$ is:
\begin{align*}
\mathrm{ad}_w &= 
\begin{pmatrix}
0 & 0 & 0 & 0 \\
0 & 0 & 0 & 0 \\
0 & 0 & 0 & 0 \\
-a_2 & a_1 & 0 & 0
\end{pmatrix}.
\end{align*}

The resulting matrix satisfies the derivation property:  
\begin{align*}
\mathrm{ad}_w(u \circ v) &= \mathrm{ad}_w(u) \circ v + u \circ \mathrm{ad}_w(v), \quad \text{for all } u, v \in A.
\end{align*} 
The computations hold for all basis elements under the Zinbiel multiplication rules. This concludes the proof.
\end{proof}

\begin{cor}
\begin{enumerate}
    \item The dimension of the inner derivations of complex two-dimensional Zinbiel algebras is zero.
    \item The dimension of the inner derivations of complex three-dimensional Zinbiel algebras 
    ranges between zero and two.
    \item The dimension of the inner derivations of complex four-dimensional Zinbiel algebras 
    ranges between zero and three.
\end{enumerate}
\end{cor}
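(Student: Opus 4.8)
The plan is to reduce the corollary to a direct inspection of the inner-derivation tables established in the preceding theorems, after first recording two structural facts that make such an inspection legitimate.

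First I would observe that $\mathrm{Inn}(A)$ is precisely the image of the linear map $\mathrm{ad}\colon A \to \Der(A)$ sending $w \mapsto \mathrm{ad}_w$, which is well-defined because each $\mathrm{ad}_w$ is a derivation by the earlier Proposition. By the rank--nullity theorem,
\[
\dim \mathrm{Inn}(A) = \dim A - \dim\{\, w \in A : \mathrm{ad}_w = 0 \,\},
\]
so computing $\dim \mathrm{Inn}(A)$ amounts to determining the rank of the parametrized matrix family $\mathrm{ad}_w$ as the coordinates $a_1,\dots,a_n$ of $w$ vary. Concretely, it is the number of linearly independent linear forms in the $a_t$ occurring among the entries $d_{ij} = \sum_t a_t(\gamma_{it}^j - \gamma_{ti}^j)$ produced by the algorithm of Section 4.

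Second, I would note that $\dim \mathrm{Inn}(A)$ is an isomorphism invariant: if $\phi\colon A \to A'$ is a Zinbiel isomorphism, then $\phi \circ \mathrm{ad}_w \circ \phi^{-1} = \mathrm{ad}'_{\phi(w)}$, so $\phi$ carries $\mathrm{Inn}(A)$ isomorphically onto $\mathrm{Inn}(A')$. Consequently it suffices to evaluate $\dim \mathrm{Inn}$ on a single representative of each isomorphism class, and the classification theorems guarantee that the lists $A_2^1$; $A_3^1,\dots,A_3^7$; and $A_4^1,\dots,A_4^{16}$ exhaust all classes in dimensions two, three, and four respectively.

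With these reductions in place, the corollary follows by reading off the ``Dimension'' column of Tables 1--3. For $n=2$ the unique class $A_2^1$ has inner-derivation dimension $0$, giving part (1). For $n=3$ the tabulated values are $0,0,0,2,2,2,0$, whose extremes are $0$ (attained by $A_3^1$) and $2$ (attained by $A_3^4$), giving part (2). For $n=4$ the tabulated values lie in $\{0,2,3\}$, with $0$ attained by $A_4^9$ at $\alpha=0$ and $3$ attained by $A_4^1$, giving part (3). The only genuine computation, already carried out in the cited tables, is the per-class rank determination; the point that most requires care is ensuring that the branch-dependent families in Table 3 (notably $A_4^8$, $A_4^9$, and $A_4^{15}$) are assigned their correct ranks for every value of the structure parameter $\alpha$, since a degenerate choice of $\alpha$ can drop the rank, and it is exactly such a degeneration ($\alpha=0$ in $A_4^9$) that realizes the lower endpoint $0$ in the four-dimensional case.
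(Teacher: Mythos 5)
Your proposal is correct and follows essentially the same route as the paper, which offers no separate argument for this corollary beyond reading the ``Dimension'' column of Tables 1--3; your added justifications (that $\dim\mathrm{Inn}(A)$ is the rank of the linear map $w\mapsto \mathrm{ad}_w$ and is an isomorphism invariant, so checking one representative per class suffices) only make explicit what the paper leaves implicit. Your identification of the extremal cases ($A_2^1$; $A_3^1$ and $A_3^4$; $A_4^9$ at $\alpha=0$ and $A_4^1$) matches the tables as printed.
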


\section{Conclusion}
In this work, we studied the inner derivations of finite-dimensional Zinbiel algebras. The dimension of the space of inner derivations ranges between zero and two for two-dimensional complex Zinbiel algebras and between zero and three for three-dimensional complex Zinbiel algebras, respectively. The dimension of inner derivations is an important invariant in the geometric classification of algebras, and it has many applications in various scientific areas.

\section*{Acknowledgement}
The authors thank the anonymous referees for their valuable suggestions and comments.

\section*{ Conflicts of Interest}
The authors declare no conflicts of interest.\\
\cite{a,b,c,d,e,f,g,h,i,j,k,l,m,n,o,p,q,r,s,t,u,v,w,x}


\begin{thebibliography}{999}

\bibitem{1}Loday, J. L. (1995). Cup-product for Leibniz cohomology and dual Leibniz algebras. Mathematica Scandinavica, 189-196.

\bibitem{2}Loday, J. L. (1993). Une version non commutative des algebres de Lie: les algebres de Leibniz. Les rencontres physiciens-mathématiciens de Strasbourg-RCP25, 44, 127-151.

\bibitem{3}Dokas, I. (2010). Zinbiel algebras and commutative algebras with divided powers. Glasgow Mathematical Journal, 52(2), 303-313.

\bibitem{4}Ayala, V., Kizil, E., \& de Azevedo Tribuzy, I. (2012). On an algorithm for finding derivations of Lie algebras. Proyecciones (Antofagasta), 31(1), 81-90.

\bibitem{5}Xiao, Z., \& Wei, F. (2012). Lie triple derivations of triangular algebras. Linear algebra and its applications, 437(5), 1234-1249.

\bibitem{6}Zhou, J. (2013). Triple derivations of perfect Lie algebras. Communications in Algebra, 41(5), 1647-1654.

\bibitem{7}Fiidow, M. A., Mohammed, N. F., Rakhimov, I. S., Husain, S. K. S., Husain, S. K. S., \& Basri, W. (2010). On inner derivations of finite dimensional associative algebras. to appear.

\bibitem{8}Patlertsin, S., Pongprasert, S., \& Rungratgasame, T. (2024). On Inner Derivations of Leibniz Algebras. Mathematics, 12(8), 1152.


\bibitem{9}Gao, J., Ni, J., \& Yu, J. (2024). Some Results on Zinbiel Algebras and Rota–Baxter Operators. Axioms, 13(5), 314.

\bibitem{10}Imed, B., Lerbet, J., \& Mosbahi, B. (2024). Central derivations of low-dimensional Zinbiel algebras. arXiv preprint arXiv:2411.15642.

\bibitem{11}Imed, B., Lerbet, J., \& Mosbahi, B. (2024). Quasi-Centroids and Quasi-Derivations of low-dimensional Zinbiel algebras. arXiv preprint arXiv:2411.09532.

\bibitem{12}Almutairi, H., \& AbdGhafur, A. (2018). Derivations of some classes of Zinbiel algebras. International Journal of Pure and Applied Mathematics, 2, 12-13.

\bibitem{13}Omirov, B. A. (2002). Classification of two-dimensional complex Zinbiel algebras. Uzbek. Mat. Zh, 2, 55-59.


\bibitem{14}Alvarez, M. A., Júnior, R. F., \& Kaygorodov, I. (2022). The algebraic and geometric classification of Zinbiel algebras. Journal of Pure and Applied Algebra, 226(11), 107106.

\bibitem{15}Ni, J. (2014). Centroids of Zinbiel algebras. Communications in Algebra, 42(4), 1844-1853.

\bibitem{16}Adashev, J. Q., Khudoyberdiyev, A. K., \& Omirov, B. A. (2010). Classifications of some classes of Zinbiel algebras. Journal of Generalized Lie Theory and Applications, 4, 1-10.


\bibitem{a}Sania, A., Imed, B., Mosbahi, B., \& Saber, N. (2023). Cohomology of compatible BiHom-Lie algebras. arXiv preprint arXiv:2303.12906.
\bibitem{b}Zahari, A., Mosbahi, B., \& Basdouri, I. (2023). Classification, Derivations and Centroids of Low-Dimensional Complex BiHom-Trialgebras. arXiv preprint arXiv:2304.06781.
\bibitem{c}Zahari, A., Mosbahi, B., \& Basdouri, I. (2023). Classification, Derivations and Centroids of Low-Dimensional Complex BiHom-Trialgebras. arXiv preprint arXiv:2304.06781.
\bibitem{d}Mosbahi, B., Zahari, A., \& Basdouri, I. (2023). Classification, $\alpha $-Inner Derivations and $\alpha $-Centroids of Finite-Dimensional Complex Hom-Trialgebras. arXiv preprint arXiv:2305.00471.
\bibitem{e}Mosbahi, B., Asif, S., \& Zahari, A. (2023). Classification of tridendriform algebra and related structures. arXiv preprint arXiv:2305.08513.
\bibitem{f}Fiidow, M. A., Zahari, A., \& Mosbahi, B. (2023). Quasi-Centroids and Quasi-Derivations of Low Dimensional Associative Algebras. arXiv preprint arXiv:2306.14331.
\bibitem{g}Asif, S., Wang, Y., Mosbahi, B., \& Basdouri, I. (2023). Cohomology and deformation theory of $\mathcal {O} $-operators on Hom-Lie conformal algebras. arXiv preprint arXiv:2312.04121.
\bibitem{h}Mansuroglu, N., \& Mosbahi, B. (2024). On structures of BiHom-Superdialgebras and their derivations. arXiv preprint arXiv:2404.12098.
\bibitem{i}Mansuroglu, N., \& Mosbahi, B. (2024). Generalized derivations of BiHom-supertrialgebras. arXiv preprint arXiv:2404.12112.
\bibitem{j}Mainellis, E., Mosbahi, B., \& Zahari, A. (2024). Cohomology of BiHom-Associative Trialgebras. arXiv preprint arXiv:2404.15567.
\bibitem{k}Mainellis, E., Mosbahi, B., \& Zahari, A. (2024). Compatible Associative Algebras and Some Invariants. arXiv preprint arXiv:2405.18243.
\bibitem{l}Imed, B., \& Mosbahi, B. (2024). Classification of ($\rho,\tau,\sigma $)-derivations of two-dimensional left-symmetric dialgebras. arXiv preprint arXiv:2411.05716.
\bibitem{m}Imed, B., Lerbet, J., \& Mosbahi, B. (2024). Quasi-Centroids and Quasi-Derivations of low-dimensional Zinbiel algebras. arXiv preprint arXiv:2411.09532.
\bibitem{n}Mosbahi, M., Elgasri, S., Lajnef, M., Mosbahi, B., \& Driss, Z. (2021). Performance enhancement of a twisted Savonius hydrokinetic turbine with an upstream deflector. International Journal of Green Energy, 18(1), 51-65.
\bibitem{o}Mosbahi, M., Lajnef, M., Derbel, M., Mosbahi, B., Aricò, C., Sinagra, M., \& Driss, Z. (2021). Performance improvement of a drag hydrokinetic turbine. Water, 13(3), 273.
\bibitem{p}Mosbahi, M., Derbel, M., Lajnef, M., Mosbahi, B., Driss, Z., Aricò, C., \& Tucciarelli, T. (2021). Performance study of twisted Darrieus hydrokinetic turbine with novel blade design. Journal of Energy Resources Technology, 143(9), 091302.
\bibitem{q}Mosbahi, M., Lajnef, M., Derbel, M., Mosbahi, B., Driss, Z., Aricò, C., \& Tucciarelli, T. (2021). Performance improvement of a Savonius water rotor with novel blade shapes. Ocean Engineering, 237, 109611.
\bibitem{r}Mosbahi, M., Derbel, M., Hannachi, M., Mosbahi, B., Driss, Z., Aricò, C., \& Tucciarelli, T. (2023). Performance study of spiral Darrieus water rotor with V-shaped blades. Proceedings of the Institution of Mechanical Engineers, Part C: Journal of Mechanical Engineering Science, 237(21), 4979-4990.
\bibitem{s}Mosbahi, B., Zahari, A., Basdouri, I. (2023). Classification, $\alpha$-Inner Derivations and $\alpha$-Centroids of Finite-Dimensional Complex Hom-Trialgebras. Pure and Applied Mathematics Journal, 12(5), 86-97. https://doi.org/10.11648/j.pamj.20231205.12
\bibitem{t}ABDOU, A. Z., \& MOSBAHI, B. (2024). CLASSIFICATION OF COMPATIBLE ASSOCIATIVE ALGEBRAS AND SOME INVARIANTS. Available at SSRN 4877916.
\bibitem{u} Makhlouf, A., \& Zahari, A. (2020). Structure and classification of Hom-associative algebras. Acta et Commentationes Universitatis Tartuensis de Mathematica, 24(1), 79-102.
\bibitem{v} Zahari, A.; Bakayoko, I. On BiHom-Associative dialgebras.Open Journal of Mathematical Sciences, Vol. 7, No. 1 (2023).pp. 96-117.
\bibitem{w}Imed, B., Lerbet, J., \& Mosbahi, B. (2024). Central derivations of low-dimensional Zinbiel algebras. arXiv preprint arXiv:2411.15642.
\bibitem{x}Okba, B., \& Mosbahi, B. (2024). Rota-type operators on 2-dimensional dendriform algebras. arXiv preprint arXiv:2411.15358.
\end{thebibliography}
\end{document}